\newtheorem{theorem}{Theorem}
\theoremstyle{plain}
\newtheorem{lemma}{Lemma}
\newtheorem{remark}{Remark}
\numberwithin{equation}{section}
\begin{document}
\title[]{On the sum of powered distances to certain sets of points on the circle}
\author{Nikolai Nikolov and Rafael Rafailov}
\address
{Institute of Mathematics and Informatics,
Bulgarian Academy of Sciences,
Acad. G. Bonchev 8, 1113 Sofia,
Bulgaria}\email{nik@math.bas.bg}

\address
{Sofia High School of Mathematics, Iskar 61, 1000 Sofia,
Bulgaria}
\email{rafael.rafailov@yahoo.com}

\subjclass[2000]{Primary: 52A40.}

\dedicatory{}
\begin{abstract}

In this paper we consider an extremal problem in geometry.
Let $\lambda$ be a real number and $A$, $B$ and $C$ be arbitrary points on the unit circle $\Gamma$. We give full characterization of the extremal behavior of the function $f(M,\lambda)=MA^\lambda+MB^\lambda+MC^\lambda$, where $M$ is a point on the unit circle as well.
We also investigate the extremal behavior of $\sum_{i=1}^nXP_i$, where $P_i, i=1,\ldots,n$ are the vertices of a regular $n$-gon and $X$ is a point on $\Gamma$, concentric to the circle circumscribed around $P_1\ldots P_n$.
We use elementary analytic and purely geometric methods in the proof.
\end{abstract}

\maketitle

\section{Introduction}

The question of placing electrical charges on a sphere in such a way that the potential energy of the system obtains its extremal values has long been of importance to physics. Problems of the above kind have also been considered in classical potential theory.\\

The planar case of the above question is answered by the general solution of placing $n$ points $M_i$, $i=1,\ldots, n$ on the unit circle in such a way as to obtain the extreme values of the sum $$\sum_{0\leq i<j\leq n}M_iM_j^\lambda,$$ where $\lambda$ is a given real number, the concrete case being $\lambda=-1$.

There is a growing amount of literature on the above problem, which is derived as a discrete analog of questions studied in classical potential theory.
This has led to the development of the problem of placing a point $M$ on the unit circle in such a way to obtain the extremal values of $$\sum_{i=1}^nMM_i^\lambda$$ for a given point set $M_i$, $i=1,\ldots,n$. This has proven to be a difficult question and in general remains open.\\
Here we consider the case $n=3$ and prove the following

\begin{theorem}

Let $\lambda$ be a real number and $A$, $B$ and $C$ be arbitrary points on the unit circle $\Gamma$.
\begin{enumerate}
\item $\lambda<0$. There is always a point $M$ on $\Gamma$ such that $f(M,\lambda)\leq 2+2^\lambda$.
\item $\lambda\in[0;2]$. There is always a point $M$ on $\Gamma$ such that $f(M,\lambda)\geq 2+2^\lambda$.
\item $\lambda\in(2;4)$. There is always a point $M$ on $\Gamma$ such that $f(M,\lambda)\geq2\sqrt3^\lambda$.
\item $\lambda\geq4$.    There is always a point $M$ on $\Gamma$ such that $f(M,\lambda)\geq 2+2^\lambda$.\\
The above bounds are sharp if and only if $A$, $B$ and $C$ are the vertices of an equilateral triangle.
\end{enumerate}
\end{theorem}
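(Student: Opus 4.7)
\smallskip\noindent\textbf{Proof proposal.}
My plan is to exhibit, for each of the four $\lambda$-ranges, an explicit test point $M\in\Gamma$ (depending on the configuration of $A,B,C$) whose value $f(M,\lambda)$ realizes the required bound. Parametrize $A,B,C$ by the half-arc measures $u,v,w>0$ of the three arcs $AB$, $BC$, $CA$, with $u+v+w=\pi$. Two families of candidates are natural: the vertices $A,B,C$, and the arc midpoints $M_A,M_B,M_C$, where $M_A$ denotes the midpoint of the arc $BC$ not containing $A$. A chord-length computation gives the closed-form expressions
\[
f(M_A,\lambda)=2\bigl(2\sin(v/2)\bigr)^\lambda+\bigl(2\cos((w-u)/2)\bigr)^\lambda,\qquad f(A,\lambda)=(2\sin u)^\lambda+(2\sin w)^\lambda,
\]
together with their cyclic analogues. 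At the equilateral configuration $u=v=w=\pi/3$, each arc-midpoint value equals $2+2^\lambda$ and each vertex value equals $2\sqrt{3}^\lambda$: precisely the extremal values in the theorem.

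The argument then splits according to $\lambda$. For cases (1) and (2), $\lambda\in(-\infty,2]$, I take $M$ to be the midpoint of the longest arc, so after relabeling $v\geq\pi/3$. Writing $x=2\sin(v/2)\in[1,2]$ and $y=2\cos((w-u)/2)$, the geometric constraints force $y\in[\max(x,2\sin(3v/2)),2]$, and the required estimate reduces to a two-variable inequality $2x^\lambda+y^\lambda\gtrless 2+2^\lambda$ on this region. Fixing $v$ and exploiting monotonicity in $y$ leaves a one-variable inequality in $v\in[\pi/3,\pi)$; the sign change of the relevant second derivative at $\lambda=2$ is exactly what forces the case boundary. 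At $\lambda=2$ the identity $\sum_i|MA_i|^2=6-2\langle M,A+B+C\rangle$ delivers the bound in a single line. For case (4), $\lambda\geq 4$, the same midpoint works when the triangle is not too asymmetric; when two vertices nearly coincide, the vertex opposite the shortest side takes over, since then one of $(2\sin u)^\lambda$ or $(2\sin w)^\lambda$ is close to $2^\lambda$ and already dominates. For case (3), $\lambda\in(2,4)$, neither family alone suffices near the equilateral configuration, and I would instead prove that the maximum of the six candidate values exceeds $2\sqrt{3}^\lambda$: the vertex opposite the shortest side controls configurations close to the equilateral, while the midpoint of the longest arc handles the complementary regime where the triangle is far from equilateral.

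The main obstacle is case (3). The second derivative of $h(x)=\sin^\lambda x$ at $x=\pi/3$ equals $\tfrac{\lambda(\lambda-4)}{4}\sin^{\lambda-2}(\pi/3)$, which is negative for $\lambda\in(2,4)$, so $h$ is locally concave at the equilateral angle. Consequently no Jensen-type averaging over the three vertices (or over the three midpoints) attains $2\sqrt{3}^\lambda$, and the test point must be chosen adaptively from the configuration; the two regimes then have to be matched at their transition, which is where the analytic bookkeeping becomes delicate. Sharpness in every case follows by tracing back the equality clauses of the monotonicity and convexity estimates used, all of which collapse to $u=v=w=\pi/3$, so the bounds are attained precisely at the equilateral triangle.
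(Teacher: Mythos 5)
Your candidate set is too small for case (4), and this is a genuine gap rather than deferred bookkeeping. The paper's key device for every $\lambda>2$ is a test point you never consider: the antipode $M'=CO\cap\Gamma$ of the vertex $C$ carrying the largest angle. It satisfies $CM'=2$ exactly, and a short argument (the paper's Lemma~2 together with the power--mean inequality, or a direct derivative computation) gives $AM'^{\lambda}+BM'^{\lambda}\geq 2$, hence $f(M',\lambda)\geq 2+2^{\lambda}$ in one stroke for all $\lambda\geq2$. Your six candidates (three vertices, three far-arc midpoints) provably cannot replace it. Take half-arcs $(u,v,w)=(\varepsilon,\,2\pi/3,\,\pi/3-\varepsilon)$ with $\varepsilon$ small, so $A$ and $B$ nearly coincide and $C$ sits at chord distance about $\sqrt3$ from both. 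Every side of this triangle has length at most $\sqrt3$, so each vertex value is at most $2\sqrt3^{\lambda}$; the midpoint of the longest arc gives $f(M_A,\lambda)\to 3\sqrt3^{\lambda}$, and the other two midpoints give far less. For $\lambda=10$ the best of the six is about $3\cdot 3^{5}=729$, while the required bound is $2+2^{10}=1026$; the antipode of the largest-angle vertex gives about $2\cdot 2^{10}+1$. The same configuration defeats your heuristic that ``when two vertices nearly coincide, the vertex opposite the shortest side takes over'': here no angle is near $\pi/2$, so no chord from any vertex is near $2$. Your case (3) split is not obviously wrong, but the matching happens exactly where both families hover at the critical value $2\sqrt3^{\lambda}$; the paper sidesteps this entirely by using the midpoint of the \emph{major} arc over the shortest side (also absent from your list), whose two distances to the endpoints are $2\cos(u/2)\geq\sqrt3$ automatically.

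A second, smaller gap concerns sharpness. Tracing equality cases in your test-point estimates only shows that at the equilateral configuration your chosen $M$ attains the stated value; it does not show that no \emph{other} $M$ does better there, which is what sharpness of $\min_{A,B,C}\max_{M}f$ requires. The paper obtains the exact extrema of $f(M,\lambda)$ over $M$ for the equilateral triangle from its Theorem~2 on regular $n$-gons (constancy of $\sum MP_i^{2k}$ for $k=1,\dots,n-1$ plus a zero-counting lemma for sums $\sum a_ib_i^{\lambda}$); some substitute for that computation is needed. For cases (1) and (2) your reduction to the isosceles worst case $w=v$ and the resulting one-variable function $2(2\sin(v/2))^{\lambda}+(2\sin(3v/2))^{\lambda}$ does coincide with the paper's, but note that the paper's convexity/concavity argument only works for $\lambda<1$ and it too must switch to the antipodal point for $\lambda\in[1,2)$, so the ``second derivative'' step you leave open is not routine.
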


Or, equivalently
\begin{enumerate}
\item $$\min_{A,B,C\in\Gamma}\max_{M\in\Gamma}f(M,\lambda)=2+2^\lambda$$ for $\lambda\in[0;2]\cup[4;\infty)$.
\item $$\min_{A,B,C\in\Gamma}\max_{M\in\Gamma}f(M,\lambda)=2\sqrt3^\lambda$$ for $\lambda\in(2;4)$.
\item $$\max_{A,B,C\in\Gamma}\min_{M\in\Gamma}f(M,\lambda)=2+2^\lambda$$ for $\lambda<0$.
\end{enumerate}

Note that in the last case the order of the maximum and the minimum is reversed. We are not interested in the maximum of the function $f(M,\lambda)$ when $\lambda<0$ as it is infinity when $M\rightarrow A$,$B$ or $C$.\\

Prior to the present article the exact extremal values of $f(M,\lambda)$, established in \cite{Pacific}, were only known for $\lambda\in[0;2]$. We also give another (more elementary) proof of the results obtained in the previous article.

The question of the extremal behavior of the function $$f_n(M,-2)=\sum_{i=1}^nMM_i^{-2}$$ is considered in \cite{Chebyshev}, where it is proved that there always exist a point $M\in\Gamma$ such that $$f_n(M,-2)\leq\frac{n^2}{4}$$ (see also \cite{Tamas} for a short proof). This bound is sharp if and only if $\{M_i:i=1 \ldots n\}$ are the vertices of a regular $n$-gon. This agrees with our results when $n=3$.

We also consider the case when there are $n$ points $P_i, i=1\ldots, n$ on the unit circle, which are the vertices of a regular $n$-gon and a point $X$ on a circle $\Gamma$, concentric to the circumscribed circle of $P_1\ldots P_n$. We study the extremal values of$$R_n(X,\lambda)=\sum_{i=1}^nXP_i^{\lambda},$$
where $X\in\Gamma$.

This problem has been considered by Stolarsky \cite{Pacific}, who solves it for $0\leq\lambda< 2n$, when $\Gamma$ is the circumscribed circle of the polygon, and by Mushkarov \cite {Mushkarov}, who finds $\lambda$ for which the sum does not depend on the position of $X$ on $\Gamma$, again when $\Gamma$ is circumscribed around $P_1\ldots P_n$, and gives a trigonometric representation for higher powers.
In this paper we characterize the extremal behavior of the sum  $\sum_{i=1}^nXP_i^{\lambda}$ and prove the following theorem:

\begin{theorem}
Let $P_i, i=1,\ldots,n$ be the vertices of a regular $n$-gon inscribed in the unit circle. Now let $\Gamma$ be a circle concentric to the circumscribed circle. Put $B_i=OP_i\bigcap\Gamma$, where $O$ is the center of the $n$-gon.

Let $X\in\Gamma$ and $$R_n(X,\lambda)=\sum_{i=1}^nXP_i^{\lambda}.$$
\begin{enumerate}
\item $\lambda<0$. The minimum of $R_n(X,\lambda)$ is achieved when $X$ bisects the arc between consecutive vertices of $B_1\ldots B_n$ and the maximum when $X\equiv B_i$. In the case when $\Gamma$ is the circumscribed circle around $P_1\ldots P_n$ this function is not bounded($X\to B_i$ for some $i$).

\item $0\leq\lambda< 2n$. If  $\lambda$ is an even integer then $R_n(X,\lambda)$ is independent of the position of $X$ on $\Gamma$.\\ Otherwise let $m$ be such an integer, that $2m\leq\lambda\leq2m+2$.\\ If $m$ is even(odd) then $R_n(X,\lambda)$ is maximal(minimal) if and only if $X$ bisects the arc between consecutive vertices of $B_1\ldots B_n$. Moreover $R_n(X,\lambda)$ is minimal(maximal) if and only if $M\equiv B_i$.
\item $2n\leq\lambda$. If $n$ is even(odd) the maximum(minimum) of $R_n(X,\lambda)$ is obtained when $X$ coincides with one of the vertices of $B_1\ldots B_n$  and the minimum(maximum) is achieved when $X$ bisects the arc between consecutive vertices.
\end{enumerate}
\end{theorem}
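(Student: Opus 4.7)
The plan is to parametrize $X = \rho e^{i\theta}$ on $\Gamma$ and $P_k = e^{2\pi i k/n}$, so that
\begin{equation*}
R_n(X,\lambda) = F(\theta) := \sum_{k=1}^{n} g(\theta - 2\pi k/n),\qquad g(\phi) = |1-\rho e^{i\phi}|^\lambda.
\end{equation*}
The function $F$ has period $2\pi/n$ and is symmetric about $\theta=0$ (where $X\equiv B_i$) and about $\theta=\pi/n$ (where $X$ bisects $B_iB_{i+1}$); both are therefore automatic critical points. The task then reduces to deciding which of $F(0)$ and $F(\pi/n)$ is larger, and to ruling out any further global extrema of $F$ on $(0,\pi/n)$.

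The main tool will be the Fourier expansion of $g$. Assuming $\rho<1$ (the case $\rho>1$ reduces to this via $|1-\rho e^{i\phi}| = \rho\,|1-\rho^{-1}e^{-i\phi}|$), the generalized binomial series yields
\begin{equation*}
g(\phi) = \sum_{m\in\mathbb{Z}} a_m e^{im\phi},\qquad a_m = (-\rho)^{|m|}\sum_{q\ge 0}\binom{\lambda/2}{|m|+q}\binom{\lambda/2}{q}\rho^{2q}.
\end{equation*}
Summing the $n$ rotates annihilates every mode with $n\nmid m$ and leaves $F(\theta) = na_0 + 2n\sum_{j\ge 1}a_{jn}\cos(jn\theta)$. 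When $\lambda = 2k$ is an even integer with $k<n$, the factor $\binom{k}{n+q}$ vanishes identically, so $a_{jn}=0$ for every $j\ge 1$ and $F$ is constant---this yields the first sub-assertion of~(2).

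In the remaining cases the extremum pattern is governed by the sign of the leading surviving coefficient $a_n$, whose $q=0$ contribution is $(-\rho)^n\binom{\lambda/2}{n}$. Writing $\lambda/2 = m+t$ with $t\in(0,1)$ and counting the negative factors of $\binom{m+t}{n} = \prod_{j=0}^{n-1}(m+t-j)/n!$, one finds $n-m-1$ of them when $m<n$ and none when $m\ge n$. Hence $\operatorname{sgn}(a_n) = (-1)^{m+1}$ in case~(2) and $\operatorname{sgn}(a_n) = (-1)^n$ in case~(3); these signs match exactly the parities asserted in the theorem, since the leading Fourier term $2na_n\cos(n\theta)$ is extremal at $\theta=0$ and $\theta=\pi/n$. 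Case~(1) is analogous and simpler: for $\lambda<0$ one has $(-1)^p\binom{\lambda/2}{p}>0$ for all $p$, so every $a_m$ is strictly positive and the maximum of $F$ is manifestly attained at $\theta=0$.

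The main obstacle will be to upgrade this dominant-term analysis into a genuine statement about the \emph{global} extrema, i.e.\ to rule out stray critical points of $F$ inside $(0,\pi/n)$. Since $\cos(jn\theta) = T_j(\cos n\theta)$ with $T_j$ the Chebyshev polynomial of the first kind, $F$ can be written as $F(\theta) = h(\cos n\theta)$ for a real function $h$ on $[-1,1]$, and any interior critical point of $F$ corresponds to a zero of $h'$ on $(-1,1)$. Establishing the required monotonicity of $h$ amounts to showing that the higher-harmonic tail $\sum_{j\ge 2} j\,a_{jn}\,U_{j-1}(y)$ never overwhelms the leading $n a_n$ term; this follows from the geometric decay in $q$ of the defining binomial sums when $\rho$ is bounded away from $1$ and will require a more delicate estimate in the regime $\rho\to 1$. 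Combined with the signs of $a_n$ above, the monotonicity places the global extrema of $F$ at $y=\pm 1$ (i.e.\ $\theta=0$ and $\theta=\pi/n$) in the pattern asserted, finishing all three cases.
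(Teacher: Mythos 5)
Your Fourier route (expanding $|1-\rho e^{i\phi}|^{\lambda}$ in a generalized binomial series and letting the sum over the $n$ rotates annihilate all modes not divisible by $n$) is genuinely different from the paper's argument, and it does correctly recover the constancy of $R_n(X,2k)$ for $k<n$ — the paper proves that same sub-statement by essentially the same root-of-unity filtering (its Theorem 3). But the part you flag as ``the main obstacle'' is not a technical loose end: it is the entire content of the theorem, and the route you sketch for it does not go through. Dominating the tail $\sum_{j\ge2}j\,a_{jn}U_{j-1}(y)$ by the leading term $na_n$ fails exactly where the theorem is most interesting, namely $\rho\to1$ and $\rho=1$ (the circumscribed circle), where the coefficients $a_{jn}$ decay only polynomially in $j$ and the crude bound $|U_{j-1}|\le j$ gives a divergent or non-dominated tail for $\lambda$ near the even integers. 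Two further sign claims are also unjustified: (i) you read off $\operatorname{sgn}(a_n)$ from the $q=0$ term alone, but the terms $\binom{\lambda/2}{n+q}\binom{\lambda/2}{q}\rho^{2q}$ alternate in sign as $q$ grows, so for $\rho$ near $1$ the $q=0$ term need not decide the sign; (ii) in case (1), positivity of all $a_m$ does make the maximum at $\theta=0$ immediate, but it does \emph{not} place the minimum at $\theta=\pi/n$ (already $\cos\phi+0.6\cos2\phi$ has its minimum in the interior of $(0,\pi)$), so the minimum claim is unproved even there.

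The paper closes precisely this gap with a soft rigidity argument instead of estimates. Its Lemma 1 shows (by Rolle's theorem and induction) that a nontrivial exponential sum $\sum_{i=1}^n a_ib_i^{\lambda}$ has at most $n-1$ real zeros in $\lambda$ counted with multiplicity. For fixed $x$ strictly between $B_1$ and the arc midpoint, $\frac{1}{\lambda}\frac{\partial F}{\partial x}=\sum_i|P_iX|^{\lambda-1}\frac{d|P_iX|}{dx}$ is such a sum, and it is already known to vanish at the $n-1$ values $\lambda=2,4,\dots,2n-2$; hence it vanishes nowhere else and each of these zeros is simple. Consequently $\frac{\partial F}{\partial x}$ cannot vanish at any interior $x$ for any other $\lambda$, so $F$ is strictly monotone in $x$ on the half-period with no quantitative control needed, and the extrema sit at $B_i$ and the arc midpoints; the assignment of max versus min then follows by tracking the sign changes across $\lambda=0,2,\dots,2n-2$ together with the $\lambda\to\pm\infty$ asymptotics. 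If you want to salvage your approach, you would need either a genuinely new positivity/monotonicity argument for $h$ valid up to $\rho=1$, or to import something equivalent to Lemma 1 — at which point the Fourier machinery becomes redundant.
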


\begin{remark}{\rm A case of part 2 of the theorem is proved in \cite{Pacific}, when $\Gamma$ is the circle circumscribed around $P_1\ldots P_n$. However is seems possible that the general result of part 2 can be proved in the same manner.

It is easy to see that part 3 of the theorem is actually true for $\lambda>2n-2$.}
\end{remark}

We first begin with the consideration of the regular $n$-gon as we will use this result later.
\medskip

\noindent{\bf Acknowledgement.} The authors would like to thank the referee for his comments and remarks, which helped improve the clarity of the paper.

\section{Regular $n$-gon}

We say that $y$ is a root of degree $k$ of an equation $f(x)=0$, where $f$ is $k-$times differentiable, if $f(y)=0$ and $f^t(y)=0$ for $t=1,\ldots,k-1$ and $f^k(y)\neq0$, where $f^t(x)$ denotes the $t$-th derivative of $f$.

We begin with the following lemma.

\begin{lemma} Let $a_1, a_2,\ldots, a_n$  and $b_1, b_2,\ldots, b_n$ be real numbers and $b_i, i=1,\ldots, n$ be nonnegative, then $$\sum_{i=1}^na_ib_i^{\lambda}=0$$ is either identically zero or has at most $n-1$ real solutions for $\lambda$ counted with their multiplicities.
\end{lemma}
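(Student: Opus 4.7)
The plan is to induct on $n$, using Rolle's theorem with careful bookkeeping of multiplicities. The base case $n=1$ is immediate: $a_1 b_1^\lambda$ is identically zero if $a_1=0$ and otherwise never vanishes.

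For the inductive step I would first reduce to the case in which $b_1>b_2>\cdots>b_n>0$ and every $a_i\ne 0$: a term with $a_i=0$ can be deleted, equal bases may be amalgamated (strictly decreasing $n$), and a term with $b_i=0$ can likewise be dropped since it contributes nothing on the domain where the expression is defined. Factoring out $b_1^\lambda>0$, which is never zero, reduces the problem to counting the zeros of
$$h(\lambda)=a_1+\sum_{i=2}^n a_i\Bigl(\tfrac{b_i}{b_1}\Bigr)^\lambda.$$
Its derivative
$$h'(\lambda)=\sum_{i=2}^n a_i\ln\Bigl(\tfrac{b_i}{b_1}\Bigr)\Bigl(\tfrac{b_i}{b_1}\Bigr)^\lambda$$
is again an exponential sum, now in $n-1$ positive, pairwise distinct bases with nonzero coefficients (each $\ln(b_i/b_1)\ne 0$), so the inductive hypothesis applies: $h'$ is either identically zero or has at most $n-2$ real roots counted with multiplicity. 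In the degenerate case $h'\equiv 0$ the function $h$ is constant; letting $\lambda\to+\infty$, where $(b_i/b_1)^\lambda\to 0$ for $i\ge 2$, that constant equals $a_1\ne 0$, so $h$ has no zeros and we are done.

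Otherwise, suppose $h$ has real zeros $\lambda_1<\cdots<\lambda_r$ of multiplicities $m_1,\ldots,m_r$. Then at each $\lambda_j$ the derivative $h'$ vanishes to order exactly $m_j-1$ (contributing $0$ when $m_j=1$), and Rolle's theorem produces at least one further zero of $h'$ in each of the $r-1$ intervals $(\lambda_j,\lambda_{j+1})$. Summing,
$$\sum_{j=1}^r(m_j-1)+(r-1)=\Bigl(\sum_{j=1}^r m_j\Bigr)-1\le n-2,$$
so $\sum_j m_j\le n-1$, completing the induction. The main obstacle is the multiplicity bookkeeping in the Rolle step; everything else is a routine Descartes-style reduction, along with the sanity check on the degenerate case $h'\equiv 0$ via behavior at $+\infty$.
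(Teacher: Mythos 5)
Your proposal is correct and follows essentially the same route as the paper: induction on $n$, dividing by $b_1^{\lambda}$, differentiating to obtain an exponential sum with $n-1$ terms, and combining Rolle's theorem with multiplicity bookkeeping. The only (minor) refinement is your treatment of the degenerate case $h'\equiv 0$, where you pin down the constant as $a_1\neq 0$ via the limit $\lambda\to+\infty$, which is slightly more explicit than the paper's ``the claim follows.''
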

\begin{proof}
We proceed by induction on the number of summands.
For $n=1$ we have that $ab^{\lambda}=0$, which does not have solutions if both of $a$ and $b$ are nonzero. If either of them is zero then $ab^{\lambda}$ is identically zero.
Now assume the statement to be true for all $k<n$.
For $k=n$ if either of $a_i$ or $b_i$ is zero then we use the induction hypothesis.

Now let $b_i, a_i$ be nonzero. As all of $b_i$ are nonzero then we can divide each term by $b_1^{\lambda}$ to get $$\sum_{i=1}^na_i\Big(\frac{b_i}{b_1}\Big)^{\lambda}=0.$$

Assume that this equation is not identically zero and its solutions are $y_1,\ldots,y_k$ with multiplicities $t_1,\ldots, t_k$ and $\sum_{i=1}^k t_i>n-1$.

Differentiating this with respect to $\lambda$ we get $$\sum_{i=2}^na_i\ln\Big(\frac{b_i}{b_1}\Big)\Big(\frac{b_i}{b_1}\Big)^{\lambda}=0=\sum_{i=2}^n a_i^{'} b_i^{'\lambda},$$
where $a_i'=a_i\ln\Big(\frac{b_i}{b_1}\Big)$ and $b_i'=\frac{b_i}{b_1}$. Assume that this expression is identically zero, then $\sum_{i=1}^na_ib_i^{\lambda}=0$ must be a constant, and the claim follows. Assume that the derivative does not vanish for all $\lambda$.
Now by the induction hypothesis the derivative has at most $n-2$ zeros.
But we have that $y_1,\ldots,y_k$ are solutions to the above equation with multiplicities $t_1-1,\ldots, t_k-1$, moreover by Rolle's theorem the derivative has at least one root in each interval $(y_i;y_{i+1})$, and thus we obtain $k-1+\sum_{i=1}^k t_i-1$ solutions (counted with their multiplicities), which is greater than $n-2$- a contradiction. It follows that $\sum_{i=1}^k t_i\leq n-1$. The lemma is proved.
\end{proof}

We continue with another problem, which is a part of Theorem 2.

\begin{theorem}
Let $P_1,\ldots, P_n$ be the vertices of a regular polygon, given a circle $\Gamma$, concentric to the circle circumscribed around $P_1P_2\ldots P_n$, then $\sum_{i=1}^n PP_i^{2k}$ is independent of the position of $P\in\Gamma$ for $k\in\{1,\ldots,n-1\}$.

\end{theorem}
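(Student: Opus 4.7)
The plan is to work in complex coordinates, expand $PP_j^{2k}$ by the binomial theorem, and exploit the standard vanishing of sums of roots of unity to kill every term that would depend on the angular position of $P$ on $\Gamma$. The hypothesis $k\leq n-1$ will turn out to be precisely what is needed to keep every character sum encountered strictly below the wrap-around threshold.

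First I would identify the plane with $\mathbb{C}$, place the center of the polygon at the origin, and write $P_j = \omega\zeta^j$ with $\zeta = e^{2\pi i/n}$ a primitive $n$-th root of unity and $\omega$ a unit complex number encoding the orientation of the polygon. Let $P = re^{i\theta}$, where $r$ is the fixed radius of $\Gamma$ and $\theta$ is the free parameter I want to eliminate. Since $PP_j^{2} = (P - P_j)(\bar P - \bar P_j)$, the binomial theorem gives
$$PP_j^{2k} \;=\; \sum_{a,b=0}^{k}\binom{k}{a}\binom{k}{b}(-1)^{a+b}\, P^{k-a}\bar P^{k-b}\, P_j^{a}\bar P_j^{b}.$$

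Second, I would sum over $j=1,\dots,n$. The only $j$-dependent factor is $P_j^{a}\bar P_j^{b} = \omega^{a-b}\zeta^{(a-b)j}$, and the elementary identity $\sum_{j=1}^{n}\zeta^{tj} = n$ if $n\mid t$ and $0$ otherwise does the rest. Because $|a-b|\leq k\leq n-1 < n$, only the diagonal terms $a = b$ survive, and they are all independent of $\theta$. Collecting them yields a clean closed form of the shape
$$\sum_{j=1}^{n} PP_j^{2k} \;=\; n\sum_{a=0}^{k}\binom{k}{a}^{2} r^{2(k-a)},$$
which plainly depends only on $r$, as required.

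I do not anticipate a real obstacle beyond bookkeeping; the heart of the argument is the one-line character-sum vanishing. The only point that needs attention is to verify that $k\leq n-1$ is exactly what forces $|a-b| < n$ throughout the expansion, and therefore that the result is sharp: as soon as $k=n$, new off-diagonal pairs $(a,b)$ with $|a-b|=n$ appear and contribute a term proportional to $\omega^{-n}P^{n} + \omega^{n}\bar P^{n}$, which genuinely depends on $\theta$.
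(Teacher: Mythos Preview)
Your proof is correct and follows essentially the same approach as the paper: identify the plane with $\mathbb{C}$, expand $|P-P_j|^{2k}$, and use the vanishing of $\sum_{j=1}^{n}\zeta^{tj}$ for $0<|t|<n$ to kill all $\theta$-dependent terms. The only cosmetic difference is that the paper substitutes $\bar x$ in terms of $x$ and organizes the expansion as a Laurent series in $x$, whereas you keep $P$ and $\bar P$ separate via the double binomial expansion; your bookkeeping is slightly cleaner and yields the explicit closed form $n\sum_{a=0}^{k}\binom{k}{a}^{2}r^{2(k-a)}$, which the paper does not display.
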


\begin{proof}

We step to the use of complex numbers. We may assume that the circumscribed circle around $P_1\ldots P_n$ is the unit circle and that the radius of $\Gamma$ is $R$.
Let us assign to the vertices of the $n$-gon the complex numbers $\xi, \xi^2,\ldots, \xi^n$, where $\xi$ is a primitive $n-$th root of unity. We wish to prove that $$\sum_{i=1}^n|x-\xi^i|^{2k}=\mbox{const}$$ for all $x$ with a fixed norm $R$ and all $k\in\{1,\ldots,n-1\}$. We have
$$\sum_{i=1}^n|x-\xi^i|^{2k}=\sum_{i=1}^n(x-\xi^i)^k(\overline{x-\xi^i})^{k}=\sum_{i=1}^n(x-\xi^i)^k(\frac{R}{x}-\frac{1}{\xi^i})^k$$
After multiplying out we obtain $$(x-\xi^i)^k(\frac{R}{x}-\frac{1}{\xi^i})^k=\sum_{j=-k}^kc_j\xi^{-ij}x^j= P_i(x)$$ for all $x$ with $|x|=R.$
We now have $$\sum_{i=1}^n|x-\xi^i|^{2k}=\sum_{i=1}^nP_i(x)=\sum_{j=-k}^k \sum_{i=1}^nc_j\xi^{-ij}x^{j},$$
but $\sum_{i=1}^n\xi^{-ij}=0$ for all $j$ except $j=mn$, where $m$ is an integer, so  $$\sum_{i=1}^n|x-\xi^i|^{2k}=nc_0.$$
\end{proof}

\begin{remark}
{\rm One can prove that this is a characteristic property of the regular $n$-gon. That is: Given $n$ different points in the plane $A_1,\ldots, A_n$ and a circle $\Gamma$, such that $\sum_{i=1}^n PA_i^{2k}$ is independent of the position of $P$ on $\Gamma$ for every $k\in\{1,\ldots,n-1\}$, then these points are the vertices of a regular $n$-gon. It is conjectured that this remain true if the condition holds only for $k=2n-2$ and this has been verified for $n=3$ and $n=4$, but the authors have no proof for higher values of $n$}
\end{remark}

We are now ready to begin with the proof of Theorem 2.

\begin{proof}Due to symmetry we need only consider the case when $X\in  \widehat{B_1M}$, where $M$ is the midpoint of the arc $\widehat{B_1B_2}$.

After we position ourselves in a Cartesian coordinate system W.L.O.G we can assume that $P_1$ has coordinates $(1,0)$. Thus the coordinates of $P_i$ are $(\cos((i-1)2\pi/n), \sin((i-1)2\pi/n))$ and $X$ has coordinates $(a\cos x, a\sin x)$, where $x\in[0;2\pi/n]$.

We can now write the sum $$\sum_{i=1}^n|P_iX|^{\lambda}=\sum_{i=1}^n((a\cos x-\cos((i-1)2\pi/n))^2+(a\sin x-\sin((i-1)2\pi/n))^2)^{\frac{\lambda}{2}}=F(x,\lambda).$$

We differentiate this with respect to $x$ to obtain
$$\frac{\partial F(x,\lambda)}{\partial x}=\sum_{i=1}^n\lambda |P_iX|^{\lambda-1}\frac{d |P_iX|}{d x}.$$
The partial derivative exists for $x\in(0;2\pi/n)$.
Now fix $x$ and consider this as a function of $\lambda$. As we are interested only in the sign of the derivative we can consider only $\sum_{i=1}^n|P_iX|^{\lambda-1}\frac{d |P_iX|}{d x}$ for $\lambda\neq0$.

As we have proved earlier $F(x,\lambda)$ is constant for $\lambda=2,4,\ldots, 2n-2$ and so $\frac{\partial F(x,\lambda)}{\partial x}$ vanishes for these values of $\lambda$. But from Lemma 1 this expression is either identically zero or has at most $n-1$ solutions for $\lambda$, counted with their multiplicities.

We shall prove that this expression as a function of $\lambda$ is not identically zero for fixed $x\in(0;2\pi/n)$ . For sake of contradiction assume otherwise.
Let $x\in(0; 2\pi/n)$. It is now easy to see that for the point $X$ corresponding to this $x$ the distances $|P_iX|$ are all different. Now take $|P_iX|=\max\{|P_iX|;\frac{d |P_iX|}{d x}\neq0\}$, if $|P_iX|>1$ then $\lim_{\lambda\to\infty}|\frac{\partial |F(x,\lambda)}{\partial x}|=\infty$. If $|P_iX|\leq1$, then let $|P_iX|=\min\{|P_iX|;\frac{d |P_iX|}{d x}\neq0\}$ and it follows that $|P_iX|<1$ as otherwise we have that there must be two distances $P_iX$ that are equal or that $n-1>0$ of $\frac{d |P_iX|}{d x}=0$, which is not possible. This is to the fact that $|P_iX|$ is increasing when $X$ travels one of the arcs $\widehat{P_i^{'}P_i^{''}}$ or $\widehat{P_i^{''}P_i^{'}}$ and decreasing on the other one, where $P_i^{'}, P_i^{''}$ are the two intersections of the line trough $O$ and $P_i$ with $\Gamma$. It is obvious that $P_i^{'}$ and $P_i^{''}$ either coincide with some of $B_i$ or are midpoints of some arc between consecutive vertices of $B_1\ldots B_n$.

Now again considering $\lim_{\lambda\to-\infty}|\frac{\partial |F(x,\lambda)}{\partial x}|=\infty$ we obtain the desired result.

As we have mentioned for a fixed $x$, $\frac{\partial F(x,\lambda)}{\partial x}=0$ for every $\lambda=0,2,4,\ldots, 2n-2$, from where it follows that these are all the solutions for $\lambda$ and each of them (except possibly $\lambda=0$) must have multiplicity on. Moreover the derivative changes sign at $\lambda=0$. This means that for a fixed $x$, $\frac{\partial F(x,\lambda)}{\partial x}$ changes sign at $\lambda=0,2,4,\ldots, 2n-2$.
Now assume that for some $\lambda_0\neq 0,2,4,\ldots, 2n-2$ there exist $y$ and $z$ in the interval $(0;2\pi/n)$, such that
$$\frac{\partial F(y,\lambda_0)}{\partial x}\frac{\partial F(z,\lambda_0)}{\partial x}<0,$$

then as $\frac{\partial F(x,\lambda_0)}{\partial x}$ is a continuous function of $x$ then there is $t\in(0;2\pi/n)$ such that $\frac{\partial F(t,\lambda_0)}{\partial x}=0$, and then it follows that $\frac{\partial F(t,\lambda)}{\partial x}=0$ for all $\lambda$, which is a contradiction.
Hence we have that the derivative $$\frac{\partial F(x,\lambda)}{\partial x}$$ does not change when $x\in(0;2\pi/n)$ for fixed $\lambda$, also for every fixed $x\in(0,2\pi/n)$ it changes sign at $\lambda=0,2,\ldots 2n-2$. Thus as $F(x,\lambda)$ is a continuous function of $x$ we have obtained that the minimum and maximum of that function for $x\in[0;2\pi/n]$ are obtained  when $x=0$, or $x=2\pi/n$.

Now consider $$\lim_{\lambda\to\infty}\frac{F(0,\lambda)}{F(2\pi/n,\lambda)}.$$

Assume that $n$ is even, then $|B_1P_{n/2+1}|>|MP_i|$ for every $i$ and then the above limit is $\infty$.
Assume that $n$ is odd, then $|MP_{\left\lceil n/2\right\rceil }|>|B_1P_i$ for every $i$ and the above limit becomes $0$.
This proves part 3 of Theorem 2.
Now taking into account parity and the above observations for the intervals in which $\frac{\partial F(x,\lambda)}{\partial x}$ changes sign the conclusion of the theorem easily follows.
\end{proof}

\begin{remark}{\rm When $\Gamma$ is the circumcircle of the regular polygon part 1 of Theorem 2 is easily proved by the observation that each of the functions $MP_i^{\lambda}+MP_{n+1-i}^{\lambda}$ is concave.}
\end{remark}

\section{Consideration of the case for three base points}

\subsection{Proof of the case $\lambda<0$}
We now consider the case $\lambda<0$.

Let $\angle{C}=\max\{\angle{A},\angle{B}, \angle{C}\}$ and $M$ be the midpoint of the smaller arc $\widehat{AB}$. We shall prove that $f(M,\lambda)\leq 2+2^\lambda$. We consider two cases:
\\
1.The angle $\angle{C}\geq\pi/2$. Then the maximum of the function $MA^\lambda+MB^\lambda+MC^\lambda$ when $C$ travels along the smaller arc $\widehat{AB}$ is achieved when $C\equiv{A}$ or when $C\equiv{B}$, as $MD>MA=MB$ for any point $D$ on the smaller arc $\widehat{AB}$. Now we have $f(M,\lambda)=3MA^{\lambda}\leq3\sqrt2^\lambda<2^\lambda+2$

2.The angle $\angle{C}=x<\pi/2$. Then $\angle{C}\in[\pi/3; \pi/2)$. Now let $C'$ and $C''$ be the points for which $\angle{ABC'}$ and $\angle{BAC''}$ respectively equal $x$. It is easy to see that $C$ belongs to the smaller arc $\widehat{C'C''}$ as $\angle{C}$ is the largest angle of the triangle. It is also easy to see that the maximum of $f(M,\lambda)$ when $C$ belongs to the arc $\widehat{C'C''}$ is obtained exactly when $C\equiv{C'}$ or $C\equiv{C''}$ as $MC''=MC'\leq MC$ for every $C$ on $\widehat{C'C''}$. Without loss of generality we can assume that $C\equiv C'$. Then we can express the function $f(M,\lambda)=2(2\sin(x/2))^\lambda+(2\sin{\frac{3}{2}x})^\lambda=2^\lambda(2\sin^\lambda\frac{x}{2}+\sin^\lambda\frac{3}{2}x)=F(x,\lambda)$. We differentiate with respect to $x$ to get $$\frac{\partial{F(x,\lambda)}}{\partial{x}}= \lambda2^\lambda(\sin^{\lambda-1}\frac{x}{2}\cos\frac{x}{2}+\frac{3}{2}\sin^{\lambda-1}\frac{3}{2}x\cos\frac{3}{2}x)$$
It is now easy to see that both $\sin^{\lambda-1}\frac{x}{2}\cos\frac{x}{2}$ and $\frac{3}{2}\sin^{\lambda-1}\frac{3}{2}x\cos\frac{3}{2}x$ are decreasing functions in the interval $[\pi/3; \pi/2)$  as $\lambda<0$. \\
Then $\frac{\partial F(x,\lambda)}{\partial x}$ is an increasing function of $x$ in this interval as $\lambda<0$, hence $F(x,\lambda)$ is a convex function of $x\in[\pi/3;\pi/2)$ if $\lambda<0$.

From here it follows that $\sup_{\substack{x\in[\pi/3;\pi/2)}}F(x,\lambda)=\max\{F(\pi/3,\lambda),\lim_{\substack{x\to\pi/2}}F(x,\lambda)\}=\max\{2+2^\lambda,3\sqrt2^\lambda\}$ as $F(x,\lambda)$ is continuous in the interval $[\pi/3;\pi/2]$. Now we have that $\max\{2+2^\lambda,3\sqrt2^\lambda\}=2+2^\lambda$ and this bound is achievable only for $ABC$ an equilateral triangle, for all other configurations of the points $ABC$ the function $MA^\lambda+MB^\lambda+MC^\lambda<2+2^\lambda$ for the specified point $M$.

Now using Theorem 2 we get that the minimum of $MA^{\lambda}+MB^{\lambda}+MC^{\lambda}$ is obtained when $M$ bisects the arc between consecutive vertices of the triangle, and in this case we have $MA^{\lambda}+MB^{\lambda}+MC^{\lambda}=2+2^{\lambda}$. This concludes the proof.

\begin{remark}{\rm This case can also be proved using the main approach of \cite{Chebyshev}. It is based on the fact that the local minima on each of the arcs between consecutive
base points must be equal for all $\lambda<0$ (Lemma 1 therein). In the case of only three points one can obtain that the equilateral triangle is indeed the extremal case. Assume otherwise.
It is not difficult to see that it is not possible all of the local minimums to be equal when two of the points are closer than $\sqrt{2}$. Assume now that $C$ is not the midpoint of the arc $\widehat{AB}$. If we consider the function $f_1=|MA|^{\lambda}+|MB|^{\lambda}+|MC_1|^{\lambda}$, where $C_1$ is the midpoint of $\widehat{AB}$. We may assume that $C$ belongs to the shorter arc $C_1B$. Due to symmetry the local minima of $f_1$ are equal on the short arcs $\widehat{AC_1}$ and $\widehat{BC_1}$.
Now we have that $\angle{C_1OC}\leq\pi/4$. Thus it follows that $f>f_1$ on $\widehat{AC_2}$ and $f<f_1$ on $\widehat{BC_2}$, where $C_2$ is the midpoint of the arc $\widehat{CC_1}$. But from here we obtain that the local minima of $f$ cannot be equal on the shorter arcs $\widehat{AC}$ and $\widehat{BC}$.
\footnote{The authors would like to thank the referee for suggesting this approach.}}
\end{remark}

\subsection{Proof of the case $\lambda>2$}

We first prove that for every three points $A,B$ and $C$ on the unit circle there exists a point $M$ also on the unit circle,  such that $f(M,\lambda)\geq\max\{2+2^\lambda, 2(\sqrt3)^\lambda\}$

Let $AB= \min\{AB,BC,CA\}$ now let the bisector of $AB$ intersect the unit circle $\Gamma$ at $M'$. Then $\angle{BAM'}=\angle{ABM'}=x$ and $\pi/3\leq x<\pi/2$. Now by the sine rule $BM'=AM'=2\sin x\geq\sqrt3$ and we have $f(M',\lambda)\geq2(\sqrt3)^\lambda$ with equality only if $x=\pi/3$ or equivalently if $ABC$ is an equilateral triangle.\\

It remains to prove that for every triangle there is a point $M'$ such that $f(M',\lambda)\geq2+2^\lambda$. We consider two cases-when $ABC$ is obtuse-angled and when it is acute-angled.\\

1. Let $\angle{C}=\max\{\angle{A},\angle{B},\angle{C}\}\geq\pi/2$, also let $O$ be the center of $\Gamma$. Let $M'=CO\cap\Gamma$. We have $CM'=2$ and $f(M',\lambda)=2^\lambda+BM'^\lambda+AM'^\lambda$. Now $\angle{BAM'},\angle{ABM'}
\leq\pi/2$ and $\angle{BAM'}+\angle{ABM'}\geq\pi/2$. \\
We have $\pi/4\leq\max\{\angle{BAM'}\angle{ABM'}\}\leq\pi/2$ and so $BM'^\lambda+AM'^\lambda>(2.1/\sqrt{2})^\lambda\geq 2$ as $\lambda\geq 2$, so $f(M',\lambda)>2+2^\lambda.$ And this bound cannot be achieved for an obtuse-angled triangle.

2.Let $c=\angle{C}=\max\{\angle{A},\angle{B},\angle{C}\}<\pi/2$. Now $M'=CO\cap\Gamma$. We have $CM'=2$ and $f(M',\lambda)=2^\lambda+BM'^\lambda+AM'^\lambda$. We shall prove that $BM'^\lambda+AM'^\lambda\geq2$. Let $\angle{ACM'}=x$. By the sine rule $
BM'^\lambda+AM'^\lambda$=$(2\sin{x})^\lambda+(2\sin{c-x})^\lambda=f_1(x,\lambda)$.
We shall prove that $\sin^\lambda{x}+\sin^\lambda{(c-x)}>2\sin^\lambda{(c/2)}$. We have that $$\frac{\partial{f_1(x,\lambda)}}{\partial{x}}=\lambda2^\lambda(\sin^{\lambda-1}x\cos{x}-\sin^{\lambda-1}(c-x)\cos(c-x))=$$$$=\lambda2^{\lambda-1}(\sin^{\lambda-2}x\sin{2x}-\sin^{\lambda-2}(c-x)\sin(2c-2x)).$$
It is now easy to see that for $x\in[0;c/2)$    $\sin^{\lambda-2}x<\sin^{\lambda-2}(c-x)$ $(\lambda\geq2)$ and $\sin2x<\sin(2c-2x)$ and so  $$\frac{\partial{f_1(x,\lambda)}}{\partial{x}}<0$$ for $x\in[0;c/2)$. With analogous arguments it follows that

$$\frac{\partial{f_1(x,\lambda)}}{\partial{x}}=0$$ for $x=c/2$.
$$\frac{\partial{f_1(x,\lambda)}}{\partial{x}}>0$$ for $x\in(c/2;c]$.\\

Then $\min_{\substack{x\in[0;c]}}(f_1(x,\lambda)+f_1(c-x,\lambda))=2f_1(c/2,\lambda)\geq2f_1(\pi/6)=2$. Equality holds iff $ABC$ is equilateral.\\

We have that $2+2^\lambda=2\sqrt3^\lambda$  for $\lambda\in\{2,4\}$, $2+2^\lambda<2\sqrt3^\lambda$ for $\lambda\in(2;4)$ and $2+2^\lambda>2\sqrt3^\lambda$
for $\lambda>4$.
We shall prove that for $ABC$ an equilateral triangle those bounds are sharp.\\

Again using Theorem 2 we get:
\begin {enumerate}
\item When $\lambda\in[2;4]$ we have that the maximum of $MA^{\lambda}+MB^{\lambda}+MC^{\lambda}$ is achieved when $M$ coincides with one of $A, B, C$ and is equal to $2\sqrt3^{\lambda}$.
\item When $\lambda>4$ we have that the maximum of $MA^{\lambda}+MB^{\lambda}+MC^{\lambda}$ is achieved when $M$ bisects the arc between consecutive vertices of the triangle $ABC$ and is equal to $2+2^{\lambda}$.
\end{enumerate}
These bounds are sharp.

We also have the minimum of $f(M,\lambda)$ when $ABC$ is an equilateral triangle. Namely when $\lambda\in[2,4]$ $\min{f(M,\lambda)}=2+2^\lambda$ and $\min{f(M,\lambda)}=2\sqrt3^\lambda$ when $\lambda>4$.
This concludes the proof.

\subsection{Proof of the case $\lambda\in[0;2]$}

This case is proved in \cite{Pacific}, but nevertheless we give a new proof, independent of the mentioned article.

We shall now prove that for every three points $A$, $B$, $C$ on the unit circle and a real number $\lambda\in(0;2)$ there exists a point $M$ again on the unit circle, such that $f(M,\lambda)=MA^\lambda+MB^\lambda+MC^\lambda\geq 2+2^\lambda$ and this bound is sharp. It is only achievable when $A$,$B$ and $C$ are the vertices of a equilateral triangle.

Let again $\angle{C}=\max\{\angle{A},\angle{B},\angle{C}\}=x$. As before it is easy to see that when $\angle{C}\geq\pi/2$ when we choose $M$ to be the midpoint of the arc $\widehat{AB}$, $f(M,\lambda)$ is greater than or equal to $3\sqrt{2}^\lambda\geq2+2^\lambda$ for $\lambda\in[0;2]$. When $\lambda\in\{0,2\}$ we have that $MA^\lambda+MB^\lambda+MC^\lambda$ is constant.
We can assume that the triangle $ABC$ is acute-angled. Then $\angle{C}\in[\pi/3; \pi/2]$ and again let $M$ be the midpoint of the arc $\widehat{AB}$.
Now let $C'$ and $C''$ be the points for which $\angle{ABC'}$ and $\angle{BAC''}$ respectively equal $x$. It is easy to see that $C$ belongs to the smaller arc $\widehat{C'C''}$ as $C$ is the largest angle of the triangle.  It is also easy to see that the minimum of $f(M,\lambda)$ when $C$ belongs to the arc $\widehat{C'C''}$ is obtained exactly when $C\equiv{C'}$ or $C\equiv{C''}$ as $MC''=MC'\leq MC$ for every $C$ on $\widehat{C'C''}$.

Let $\angle{A}=x$. Now using the sine rule we get $$f(M,\lambda)=MA^\lambda+MB^\lambda+MC^\lambda=2\Big(2\sin\frac{x}{2}\Big)^\lambda+\Big(\sin\frac{3}{2}x\Big)^\lambda=F(x,\lambda).$$
We shall prove that if $x\in[\pi/3;\pi/2]$  then $F(x,\lambda)\geq2+2^\lambda$.

We consider two cases-$\lambda\in(0;1)$ and $\lambda\in[1;2)$.

Let $\lambda\in(0;1)$.

After differentiating with respect to $x$ we get:
$$\frac{\partial{F(x,\lambda)}}{\partial{x}}=\lambda2^\lambda(\sin^{\lambda-1}\frac{x}{2}\cos\frac{x}{2}+ \frac{3}{2}\sin^{\lambda-1}\frac{3}{2}x\cos\frac{3}{2}x).$$\label{equation}\\

It is now easy to see that both $\sin^{\lambda-1}\frac{x}{2}\cos\frac{x}{2}$ and $\sin^{\lambda-1}\frac{3}{2}x\cos\frac{3}{2}x$ are decreasing functions as $\lambda-1<0$. Then we have that $F(x,\lambda)$ is a concave function of $x$ when $\lambda\in(0;1)$. It follows that $$\min_{x\in[\pi/3;\pi/2)}{F(x,\lambda)}=\min\{F(\pi/3,\lambda),\lim_{x\to\pi/2}F(x,\lambda)\}=F(\pi/3;\lambda)=2+2^\lambda$$
and for every $x\neq\pi/3$ we have $F(x,\lambda)>2+2^\lambda$. We shall later prove that for $A$,$B$,$C$ the vertices of an equilateral triangle this bound is sharp.\\
Now let $\lambda\in[1;2)$. Let $CO\cap\Gamma=M$. We shall prove that for the point $M$ we have $AM^\lambda+BM^\lambda+CM^\lambda\geq 2+2^\lambda$. We have that $CM=2^\lambda$. We only need to prove that $BM+CM\geq2$ as we have that $$BM^\lambda+AM^\lambda\geq2\Big(\frac{AM+BM}{2}\Big)^\lambda.$$

\begin{lemma}Let $A$,$B$ and $C$ be points on the unit circle $\Gamma$ with center $O$. Assume $\angle{C}=\max\{\angle{A},\angle{B},\angle{C}\}$ and $M=CO\cap\Gamma$ then $MA+MB\geq2$.
\end{lemma}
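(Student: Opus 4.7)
The plan is to reduce the geometric inequality $|MA|+|MB|\geq 2$ to a trigonometric inequality on the two smaller angles of $\triangle ABC$, and then minimize over the admissible region. First, since $M$ is the antipode of $C$, the segment $CM$ is a diameter, so $|CM|=2$, and Thales' theorem gives $\angle MAC = \angle MBC = \pi/2$. Applying the Pythagorean theorem in the right triangle $MAC$ together with the extended law of sines $|AC|=2\sin B$ (in $\triangle ABC$) yields $|MA|^2 = 4-4\sin^2 B = 4\cos^2 B$. Because $\angle C$ is the largest angle, both $\angle A$ and $\angle B$ are at most $\pi/2$, so $|MA|=2\cos B$; by symmetry $|MB|=2\cos A$. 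The lemma is therefore equivalent to
\[
\cos A + \cos B \geq 1.
\]

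Next, I would relabel if necessary so that $\angle A \leq \angle B \leq \angle C$; this forces $\angle A \leq \pi/3$, and the constraint $\angle C \geq \angle B$ rewrites as $\angle B \leq (\pi-\angle A)/2$. Monotonicity of cosine on $[0,\pi/2]$ then gives
\[
\cos B \geq \cos\frac{\pi-A}{2} = \sin\frac{A}{2},
\]
and using the identity $\cos A = 1 - 2\sin^2(A/2)$ I obtain
\[
\cos A + \cos B \geq 1 + \sin\frac{A}{2}\left(1 - 2\sin\frac{A}{2}\right).
\]
For $A\in[0,\pi/3]$ we have $\sin(A/2) \in [0,1/2]$, so both factors of the last summand are nonnegative; this establishes the desired inequality. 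Equality forces $\sin(A/2) \in \{0,1/2\}$: the former is the degenerate case $\angle A = 0$, while the latter gives $\angle A = \pi/3$ and hence $\angle B = \angle C = \pi/3$, so $\triangle ABC$ is equilateral.

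The main obstacle is essentially bookkeeping: one must verify that $\angle A, \angle B \leq \pi/2$ so that the absolute values drop from $|\cos B|$ and $|\cos A|$, which is immediate from $\angle C$ being maximal together with $\angle A+\angle B+\angle C = \pi$; and the WLOG relabelling must be arranged so that the binding inequality is $\angle C\geq \angle B$, since that is the one used in the bound on $\cos B$. Beyond these minor checks, the argument is a clean one-variable optimization that simultaneously pins down the equality case.
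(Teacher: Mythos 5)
Your proof is correct, and it takes a genuinely different route from the paper's. The paper parametrizes the position of $C$ (equivalently of $M$) by an angle $x$, writes $MA+MB=2(\sin x+\sin(c-x))$, and argues by concavity in $x$ that the minimum occurs at the extreme (isosceles) configurations $C\equiv C'$ or $C\equiv C''$; it then runs a second one-variable concavity argument in the base angle $\gamma\in[\pi/3,\pi/2)$ and evaluates at the endpoints to get the value $2$. You instead exploit the fact that $CM$ is a diameter to obtain the closed form $MA=2\cos B$, $MB=2\cos A$ (via Thales and the extended law of sines), reducing the lemma to the constrained trigonometric inequality $\cos A+\cos B\geq 1$ when $\angle C$ is the largest angle; this you settle in one step with the bound $\cos B\geq\sin(A/2)$ and the half-angle identity, so that $\cos A+\cos B\geq 1+\sin(A/2)\bigl(1-2\sin(A/2)\bigr)\geq 1$ for $A\leq\pi/3$. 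Your route is shorter, avoids both concavity arguments (and the reduction to the isosceles case), and delivers the equality characterization (equilateral, up to the degenerate case $\angle A=0$) as an immediate byproduct, which is the sharpness information the surrounding argument actually needs. The only bookkeeping points, which you address, are that $\angle A,\angle B\leq\pi/2$ (so the square roots resolve to $2\cos B$ and $2\cos A$ without absolute values) and that the relabelling puts the binding constraint on $\angle B\leq(\pi-\angle A)/2$.
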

\begin{proof}
We have that $MA+MB=2(\sin{x}+\sin(c-x))=f(x)$, where $x=\angle{MAC}$ and $c=\pi-\angle{ACB}<\pi/2$. We have that $f'(x)=2(\cos{x}-\cos(c-x))$ and $f'(x)>0$ for $x\in[0;c/2)$, $f'(x)=0$ for $x=c/2$ and $f'(x)<0$ for $x\in(c/2;c]$.\\

Now let $C'$ and $C''$ be the points for which $\angle{ABC'}$ and $\angle{BAC''}$ respectively equal $x$. It is easy to see that $C$ belongs to the smaller arc $\widehat{C'C''}$ as $C$ is the largest angle of the triangle.
We have that $\min MA^{\lambda}+MB^{\lambda}$ is obtained when $MO\cap\Gamma=C'$ or $MO\cap\Gamma=C''$ as $f(x)$ is concave. Now $\angle{ABC}=\angle{BCA}=\gamma$. Then $MA+MB=2(\sin(\pi/2-\gamma)+\sin(2\gamma-\pi/2))=\cos{\gamma}-\cos{2\gamma}=f_1(\gamma)$. Differentiating $f_1(\gamma)$ we get $f_1'(\gamma)=2\sin{2\gamma}-\sin{\gamma}$ which is a decreasing function of $\gamma\in[\pi/3;\pi/2)$. This gives us that $f_1(\gamma)$ is a concave function when $\gamma\in [\pi/3;\pi/2)$ and it follows that $$\min f_1(\gamma)=\min_{\gamma\in[\pi/3;\pi/2)}\{f_1(\pi/3),\lim_{x\to\pi/2}f_1(x)\}=2.$$
\end{proof}

Now $BM^\lambda+AM^\lambda\geq2\Big(\frac{AM+BM}{2}\Big)^\lambda\geq2$ with equality only when $AM=BM=1$ which is possible only when $A$,$B$ and $C$ are the vertices of an equilateral triangle.

In such a way we obtain that when $\lambda\in(0;2)$ there exists a point $M$ on the unit circle, such that $MA^\lambda+MB^\lambda+MC^\lambda\geq2+2^\lambda$ and this bound is achievable only if $A$, $B$ and $C$ are the vertices of an equilateral triangle.\\

Now using again the result of Theorem 2 one easily obtains that the maximum of $MA^{\lambda}+MB^{\lambda}+MC^{\lambda}$ is obtained when $M$ is the midpoint of one of the arcs between consecutive vertices and it indeed equals $2+2^{\lambda}$.

\end{document}